\theoremstyle{plain}
\theoremstyle{definition}
\newtheorem{theorem}{Theorem}
\newtheorem{lemma}[theorem]{Lemma}
\newtheorem{definition}[theorem]{Definition}
\newtheorem{remark}[theorem]{Remark}
\theoremstyle{remark}
\numberwithin{equation}{section}
\newcommand{\SP}{\: \: \: \: \:}
\title[Compact non--uniformizable Li--Yorke chaotic dynamical systems]{Compact non--uniformizable Li--Yorke chaotic dynamical systems via an example}
\author[M. Pourattar, F. Ayatollah Zadeh Shirazi]{Mehrnaz Pourattar, Fatemah Ayatollah Zadeh Shirazi}
\begin{document}
\begin{abstract}
The main aim of this paper is extending the concept of scambled pair and
Li--Yorke chaos to non--uniform compact dynamical systems. 
We show for finite (compact Alexandroff) topological space
$X$ with at least two elements the following statements are equivalent:
\begin{itemize}
\item one--sided shift $\sigma:X^{\mathbb{N}}\to X^\mathbb{N}$ is  Li--Yorke chaotic,
\item one--sided shift $\sigma:X^{\mathbb{N}}\to X^\mathbb{N}$ has at least one 
scrambled pair,
\item one--sided shift $\sigma:X^{\mathbb{N}}\to X^\mathbb{N}$ has at least one 
non--asymptotic pair,
\item there exists $a,b\in X$ such that $\overline{\{a\}}\cap\overline{\{b\}}=\varnothing$,
\item $\bigcap\{\overline{\{a\}}:a\in X\}=\varnothing$.
\end{itemize}
\end{abstract}
\maketitle
\noindent {\small {\bf 2020 Mathematics Subject Classification:}  37B02, 54C05   \\
{\bf Keywords:}}  Alexandroff space. asymptotic pair, Li--Yorke chaos, one--sided shift, scrambled pair.
\section{Introduction}
\noindent By dynamical system $(X,f)$ or less formally $f:X\to X$ we mean a topological
space $X$ and continuous map $f:X\to X$. 
The idea of Li and Yorke in \cite{li} have been followed by so many mathematicians
since 1975. Li--Yorke chaos and its related topics 
has been considered in unit interval (e.g., \cite{butler, du1, du2, mizera}), compact metric (e.g. \cite{lou}) and uniform (e.g. \cite{rezavand}) dynamical systems, 
our main aim is to generalize 
the concept in general compact topological spaces.
\section{Preliminaries}
\noindent Let's make a glance on Li--Yorke chaotic compact metric and compact uniform
dynamical systems.
\begin{remark}\label{metric}
In compact metric space $(X,d)$ and continuous map $f:X\to X$ 
we say $(x,y)\in X\times X$ is an scrambled pair (or $x,y$ are scrambled) if the following
two conditions hold:
\begin{itemize}
\item[(m-1)] $\mathop{\liminf}\limits_{n\to\infty}d(f^n(x),f^n(y))=0$,
\item[(m-2)] $\mathop{\limsup}\limits_{n\to\infty}d(f^n(x),f^n(y))>0$.
\end{itemize}
We also say $A\subseteq X$ is an scrambled subset of $X$ if each distinct 
points $x,y\in A$ are scambled. $f:X\to X$ is
Li--Yorke chaotic if $X$ has an uncountable scrambled set (see e.g. \cite{blan}).
\end{remark}
\noindent In compact metric space $X$, $\{U\subseteq X\times X:\exists\varepsilon>0\:\{(x,y)\in X\times X:d(x,y)<\varepsilon\}\subseteq U\}$ is
the unique compatible uniformity on $X$.
\\
For arbitrary set $A$ let $\Delta_A:=\{(a,a):a\in A\}$. Moreover
let's mention that in compact Hausdorff space $X$, $\{U\subseteq X\times X:\Delta_X
\subseteq U^\circ\}$ is the unique compatible uniformity on $X$. Let's go ahead  to Li--Yorke chaos in compact Hausdorff (hence uniform) dynamical systems. 
\begin{remark}\label{uniform}
In compact Hausdorff uniform space $(X,\mathcal{U})$ and continuous map $f:X\to X$ we say $(x,y)\in X\times X$ is an scrambled pair (or $x,y$ are scrambled) if
the following
two conditions hold:
\begin{itemize}
\item[(u-1)] for each $U\in\mathcal{U}$, $\{n\in\mathbb{N}:(f^n(x),f^n(y))\in U\}\neq\varnothing$,
\item[(u-2)] there exists $U\in\mathcal{U}$ such that $\{n\in\mathbb{N}:(f^n(x),f^n(y))\notin U\}$ is infinite.
\end{itemize}
We also say $A\subseteq X$ is an scrambled subset of $X$ if each distinct points $x,y\in A$ are scambled. $f:X\to X$ is
Li--Yorke chaotic if $X$ has an uncountable scrambled set (see e.g. \cite{rezavand, attar}).
\end{remark}
\noindent Now we are ready to have a definition of Li--Yorke chaos in general compact dynamical systems. 
\begin{definition}\label{general}
In compact  space $X$ and continuous map $f:X\to X$ we say $(x,y)\in X\times X$ is an scrambled pair (or $x,y$ are scrambled) if the following two conditions hold:
\begin{itemize}
\item[(g-1)] for each open neighbourhood $U$ of $\Delta_X$ in $X\times X$,
	$\{n\in\mathbb{N}:(f^n(x),f^n(y))\in U\}\neq\varnothing$,
\item[(g-2)] there exists open neighbourhood $U$ of $\Delta_X$ in $X\times X$ 
	such that $\{n\in\mathbb{N}:(f^n(x),f^n(y))\notin U\}$ is infinite.
\end{itemize}
We also say $A\subseteq X$ is an scrambled subset of $X$ if each distinct points $x,y\in A$ are scrambled. $f:X\to X$ is
Li--Yorke chaotic if $X$ has an uncountable scrambled set.
\end{definition}
\begin{definition}
In dynamical system $f:X\to X$ we say  $(x,y)\in X\times X$ is a proximal pair (or $x,y$ are proximal) if there exists $z\in X$
and a net $\{n_\alpha\}_{\alpha\in\Lambda}$ in $\mathbb N$ such that both nets $\{f^{n_\alpha}(x)\}_{\alpha\in\Lambda}$ and 
$\{f^{n_\alpha}(y)\}_{\alpha\in\Lambda}$ converge to $z$ \cite{ellis}. Note that in dynamical system $f:X\to X$:
\\
$\bullet$ if $X$ is compact metric with compatible metric $d$, $x,y\in X$ are proximal if and only if  (m-1) holds.
 \\
 $\bullet$ if $X$ is compact Hausdorff with compatible uniformity $\mathcal U$, $x,y\in X$ are proximal if and only if  (u-1) holds.
 \\
  $\bullet$ if $X$ is compact, $x,y\in X$ are proximal if and only if  (g-1) holds.
\end{definition}
\begin{definition}
In dynamical system $f:X\to X$ we say  $(x,y)\in X\times X$ is an asymptotic pair (or $x,y$ are asymptotic) if 
$\{n\in\mathbb{N}:(f^n(x),f^n(y))\notin U\}$ is finite for each open neighbourhood $U$ of $\Delta_X$ in $X\times X$.
Note that in dynamical system $f:X\to X$:
\\
$\bullet$ if $X$ is compact metric with compatible metric $d$, $x,y\in X$ are asymptotic if and only if (m-2) does not hold.
 \\
$\bullet$ if $X$ is compact Hausdorff with compatible uniformity $\mathcal U$, $x,y\in X$ are asymptotic if and only if  (u-2) does not hold.
\\
Hence:
\\
$\bullet$ compact metric dynamical system $f:X\to X$ satisfies Definition~\ref{metric} if and only if it satisfies Definition~\ref{uniform} (resp.
	Definition~\ref{general}),
\\
$\bullet$ compact Hausdorff dynamical system $f:X\to X$ satisfies Definition~\ref{uniform} if and only if it satisfies Definition~\ref{general}.
\\
$\bullet$ In compact dynamical system $f:X\to X$, points $x,y\in X$ are scrambled if and only
if they are proximal and non-asymptotic.
\end{definition}
\section{Exploring via an example}
\noindent In this section in topological space $X$, equip $X^\mathbb{N}$
with product (pointwise convergence) topology and consider one--sided shift  $\mathop{\sigma:X^\mathbb{N}\to X^\mathbb{N}\SP\SP}\limits_{
(x_n)_{n\in\mathbb N}\mapsto(x_{n+1})_{n\in\mathbb N}}$. 
\\
Alexandroff spaces have been introduced by P. Alexandroff in \cite{alex}.
Various sub--categories of  topological spaces devoted to Alexandroff spaces, like
finite topological spaces \cite{finite}, functional Alexandroff spaces \cite{func},
Khalimsky spaces \cite{khal}, etc..
\\
Let's recall that a topological space $X$ is an Alexandroff  space if intersection of any nonempty 
collection of  $X$ is open (or equivalently each point has a smallest open neighbourhood).
\\
If $X$ is an Alexandroff space with at least two elements,
for each $x\in X$ suppose
$V(x)$ denotes the smallest open neighbourhood of $x$. 
For $x_1,\ldots, x_n\in X$ let $G(x_1,\ldots,x_n)=V(x_1)\times V(x_2)\times\cdots\times V(x_n)\times X\times X\times\cdots$, then 
$\{G(x_1,\ldots,x_n):n\in\mathbb{N}$
and $x_1,\ldots, x_n\in X\}$ is a topological basis of $X^\mathbb{N}$. For
$n\in\mathbb{N}$ also let $\Gamma_n=\bigcup\{G(x_1,\ldots,x_n)\times G(x_1,\ldots,x_n):x_1,\ldots, x_n\in X\}$.
\begin{lemma}\label{salam10}
Suppose $X$ is a compact Alexandroff space and $U\subseteq X^\mathbb{N}\times X^\mathbb{N}$.
There exists $N\in\mathbb{N}$ with $\Gamma_N\subseteq U$ if and only if $\Delta_{X^\mathbb{N}}$ contained in the interior of $U$.
\end{lemma}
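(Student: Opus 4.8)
The plan is to prove the two implications separately, the reverse one being where compactness of $X$ (via Tychonoff applied to $X^{\mathbb N}$) does the real work. For the forward direction, assume $\Gamma_N\subseteq U$ for some $N$. I would first note that $\Gamma_N$ is open, being a union of sets $G(x_1,\ldots,x_N)\times G(x_1,\ldots,x_N)$, each a product of two basic open subsets of $X^{\mathbb N}$. Next, $\Delta_{X^{\mathbb N}}\subseteq\Gamma_N$: for $x=(x_n)_{n\in\mathbb N}$ we have $x_i\in V(x_i)$ for every $i$, so $x\in G(x_1,\ldots,x_N)$ and hence $(x,x)\in G(x_1,\ldots,x_N)\times G(x_1,\ldots,x_N)\subseteq\Gamma_N$. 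Since $\Gamma_N$ is open and contained in $U$, it follows that $\Delta_{X^{\mathbb N}}\subseteq\Gamma_N\subseteq U^{\circ}$.

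For the converse, suppose $\Delta_{X^{\mathbb N}}\subseteq U^{\circ}$. The elementary fact used throughout is the Alexandroff specialization property: for $y,z\in X$ one has $y\in V(z)$ if and only if $V(y)\subseteq V(z)$, since $V(z)$ is an open set containing $y$ and $V(y)$ is the smallest such. Fix $x=(x_n)_{n\in\mathbb N}\in X^{\mathbb N}$. Because $(x,x)\in U^{\circ}$ and the products $G(y_1,\ldots,y_m)\times G(z_1,\ldots,z_k)$ form a basis of $X^{\mathbb N}\times X^{\mathbb N}$, there is one with $(x,x)\in G(y_1,\ldots,y_m)\times G(z_1,\ldots,z_k)\subseteq U^{\circ}$. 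Setting $n(x)=\max(m,k)$ and combining $x_i\in V(y_i)$, $x_i\in V(z_i)$ with the specialization property gives $G(x_1,\ldots,x_{n(x)})\subseteq G(y_1,\ldots,y_m)\cap G(z_1,\ldots,z_k)$, and therefore
\[
G(x_1,\ldots,x_{n(x)})\times G(x_1,\ldots,x_{n(x)})\subseteq U .
\]

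Now $\{G(x_1,\ldots,x_{n(x)}):x\in X^{\mathbb N}\}$ is an open cover of $X^{\mathbb N}$, which is compact by Tychonoff, so finitely many of its members, say associated with points $x^{(1)},\ldots,x^{(j)}$ and lengths $n_i=n(x^{(i)})$, already cover $X^{\mathbb N}$. Put $N=\max(n_1,\ldots,n_j)$; I claim $\Gamma_N\subseteq U$. Given $(a,b)\in\Gamma_N$, there are $c_1,\ldots,c_N\in X$ with $a,b\in G(c_1,\ldots,c_N)$; extend $c_1,\ldots,c_N$ to a point $c\in X^{\mathbb N}$ and choose $i$ with $c\in G(x^{(i)}_1,\ldots,x^{(i)}_{n_i})$, noting $n_i\le N$. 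Then for $\ell\le n_i$ we have $c_\ell\in V(x^{(i)}_\ell)$, so $V(c_\ell)\subseteq V(x^{(i)}_\ell)$, and $a_\ell\in V(c_\ell)\subseteq V(x^{(i)}_\ell)$; hence $a\in G(x^{(i)}_1,\ldots,x^{(i)}_{n_i})$, and the same holds for $b$. Consequently $(a,b)\in G(x^{(i)}_1,\ldots,x^{(i)}_{n_i})\times G(x^{(i)}_1,\ldots,x^{(i)}_{n_i})\subseteq U$, completing the argument.

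The main obstacle worth flagging is that Alexandroff spaces need not be Hausdorff, so $\Delta_{X^{\mathbb N}}$ need not be closed and one cannot apply a tube lemma directly to the compact diagonal. The argument gets around this by using the product-basis structure and the specialization property to replace an arbitrary basic neighbourhood of $(x,x)$ inside $U^{\circ}$ by a \emph{symmetric} one of the form $G(x_1,\ldots,x_n)\times G(x_1,\ldots,x_n)$, and then invoking compactness of $X^{\mathbb N}$ itself rather than of the diagonal; a secondary subtlety is that $n(x)$ depends on $x$, so the passage to a single $N$ genuinely relies on the finite subcover together with the monotonicity $G(c_1,\ldots,c_N)\subseteq G(c_1,\ldots,c_{n_i})$ for $n_i\le N$.
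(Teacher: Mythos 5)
Your proof is correct and follows essentially the same route as the paper: symmetrize a basic neighbourhood of each diagonal point into a set of the form $G(x_1,\ldots,x_{n(x)})\times G(x_1,\ldots,x_{n(x)})$ using the minimal-neighbourhood (specialization) property, extract a finite subcover by compactness, and take $N$ to be the maximum of the finitely many lengths. The only cosmetic difference is that you cover $X^{\mathbb N}$ itself rather than the compact diagonal $\Delta_{X^{\mathbb N}}$, which is equivalent via $x\mapsto(x,x)$.
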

\begin{proof}
Consider $N\in\mathbb{N}$, it is evident that 
$\Gamma_N$ is an open subset of $X^\mathbb{N}\times X^\mathbb{N}$ moreover for
$a=(a_n)_{n\in\mathbb{N}}\in X^\mathbb{N}$ we have $(a,a)\in G(a_1,\ldots,a_N)\times G(a_1,\ldots,a_N)\subseteq \Gamma_N$, thus $\Delta_{X^\mathbb{N}}\subseteq \Gamma_N$. Hence
$\Gamma_N$ is an open subset of $X^\mathbb{N}\times X^\mathbb{N}$ 
containing $\Delta_{X^\mathbb{N}}$.
\\
Now suppose $V$ is an open subset of $X^\mathbb{N}\times X^\mathbb{N}$ containing 
$\Delta_{X^\mathbb{N}}$. By compactness of $X$ and Tychonoff's theorem, $X^{\mathbb N}$
is compact. The continuity of $\mathop{X^{\mathbb N}\to X^{\mathbb N}\times X^{\mathbb N}}\limits_{x\mapsto(x,x)\SP\SP}$ leads to the compactness of $\Delta_{X^{\mathbb N}}$. For
each $z=(z_n)_{n\geq1}$ there exist $n_z\in\mathbb{N}$
and open subsets $U_1,\ldots,U_{n_z},W_1,\ldots,W_{n_z}$ of $X$ such that
$(z,z)\in (U_1\times\cdots\times U_{n_z}\times X\times X\times\cdots)\times 
(W_1\times\cdots\times W_{n_z}\times X\times X\times\cdots)\subseteq V$ thus
{\small\begin{eqnarray*}
(z,z)\in A^z&:=&G(z_1,\ldots,z_{n_z})\times G(z_1,\ldots,z_{n_z}) \\
&\subseteq &(U_1\times\cdots\times U_{n_z}\times X\times X\times\cdots)\times 
(W_1\times\cdots\times W_{n_z}\times X\times X\times\cdots)\\
&\subseteq& V\:.
\end{eqnarray*}}
Since $\Delta_{X^{\mathbb N}}\subseteq \bigcup \{A^z:z\in X^{\mathbb{N}}\}$ and
$\Delta_{X^{\mathbb N}}$ is compact there exist $z^1,\ldots, z^p\in X^{\mathbb N}$ such that
$\Delta_{X^{\mathbb N}}\subseteq A^{z^1}\cup\cdots\cup A^{z^p}$. Let 
\[N=\mathop{\max}\limits_{1\leq i\leq p}n_{z^i}\:.\]
We claim $\Gamma_N\subseteq V$. For each $y=(y_n)_{n\in\mathbb{N}}\in X^{\mathbb N}$
there exists $j\in\{1,\ldots,p\}$ such that $(y,y)\in A^{z^j}$ therefore $y_i\in V(z^j_i)$
and $V(y_i)\subseteq V(z^j_i)$
for each $i\in\{1,\ldots,n_{z^j}\}$ (where $z^j=(z^j_i)_{i\in\mathbb{N}}$) hence
$G(y_1,\ldots,y_{n_{z^j}})\subseteq G(z^j_1,\ldots,z^j_{n_{z^j}})$, so
\begin{eqnarray*}
G(y_1,\ldots,y_N)\times G(y_1,\ldots,y_N) &\subseteq &
G(y_1,\ldots,y_{n_{z^j}})\times G(y_1,\ldots,y_{n_{z^j}}) \\
&\subseteq &
G(z^j_1,\ldots,z^j_{n_{z^j}})\times G(z^j_1,\ldots,z^j_{n_{z^j}})\\
&=&A^{z^j} \subseteq U
\end{eqnarray*}
which leads to $\Gamma_N\subseteq U$.
\end{proof}
\begin{lemma}\label{salam20}
In Alexandroff space $X$ for nonempty subset $D$ of $X$ we have
\[\bigcap\{\overline{\{x\}}:x\in D\} =\{z\in X:D\subseteq V(z)\}\:.\]
In particular, $\bigcap\{\overline{\{x\}}:x\in D\} =\{z\in X:V(z)=X\}$.
\end{lemma}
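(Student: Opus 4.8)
The plan is to reduce the displayed set identity to an elementary pointwise statement about closures in an Alexandroff space, and then read off the set equality by intersecting. The key observation I would record first is: for $x,z\in X$ one has $z\in\overline{\{x\}}$ if and only if $x\in V(z)$. Indeed, by definition of closure, $z\in\overline{\{x\}}$ means that every open neighbourhood of $z$ contains $x$; since $V(z)$ is the smallest open neighbourhood of $z$ (this is where the Alexandroff hypothesis enters), the forward direction is immediate because $V(z)$ itself is such a neighbourhood, and the backward direction follows because any open $U\ni z$ satisfies $V(z)\subseteq U$, whence $x\in V(z)\subseteq U$.

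Granting this equivalence, the first equality is pure bookkeeping: $z\in\bigcap\{\overline{\{x\}}:x\in D\}$ holds iff $z\in\overline{\{x\}}$ for every $x\in D$, iff $x\in V(z)$ for every $x\in D$, iff $D\subseteq V(z)$. This is exactly the claimed description $\{z\in X:D\subseteq V(z)\}$ of the intersection.

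For the ``in particular'' clause I would simply specialise to $D=X$. Then the condition $D\subseteq V(z)$ becomes $X\subseteq V(z)$, and since $V(z)\subseteq X$ always holds, this is equivalent to $V(z)=X$; hence $\bigcap\{\overline{\{x\}}:x\in X\}=\{z\in X:V(z)=X\}$.

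I do not expect a genuine obstacle here: the entire argument is definition-chasing, and the only point that requires a little care is invoking the defining property of $V(z)$ (its being the \emph{smallest} open neighbourhood of $z$) correctly in both directions of the pointwise characterisation of membership in $\overline{\{x\}}$; everything after that is set-theoretic manipulation.
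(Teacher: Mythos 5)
Your proof is correct and follows essentially the same route as the paper: both reduce the identity to the pointwise fact that $z\in\overline{\{x\}}$ iff $x\in V(z)$ (the paper phrases it as $V(z)\cap\{x\}\neq\varnothing$) and then intersect over $x\in D$. Your explicit treatment of the ``in particular'' clause, reading it as the case $D=X$, is the natural reading and matches how the lemma is used later.
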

\begin{proof}
Note that for each $E\subseteq X$ and $x\in X$, $x$ belongs to $\overline{E}$ if and only if
each open neighbourhood of $x$ intersects $E$ or equivalently $V(x)\cap E\neq\varnothing$.
Consider the following equations
\begin{eqnarray*}
\bigcap\{\overline{\{x\}}:x\in D\} & = & \{z\in X:\forall x\in D\: z\in \overline{\{x\}}\} \\
& = & \{z\in X:\forall x\in D\: V(z)\cap \{x\}\neq\varnothing\} \\
& = & \{z\in X:\forall x\in D\: x\in V(z)\} =\{z\in X:D\subseteq V(z)\}
\end{eqnarray*}
\end{proof}
\begin{lemma}\label{salam30}
In nonempty compact space $X$, the following statements are equivalent:
\begin{itemize}
\item[a.] $\bigcap\{\overline{\{x\}}:x\in X\}\neq\varnothing$,
\item[b.] for all $x,y\in X$ we have $\overline{\{x\}}\cap \overline{\{y\}}\neq\varnothing$,
\item[c.] there exists $z\in X$ such that $X$ is the unique open neighbourhood of $z$.
\end{itemize}
\end{lemma}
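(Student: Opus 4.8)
The plan is to prove the cycle (a)$\Rightarrow$(b)$\Rightarrow$(a)$\Rightarrow$(c)$\Rightarrow$(a), noting in advance that everything except (b)$\Rightarrow$(a) is essentially formal and that compactness enters only in that one step. Throughout I would use the elementary fact (as recalled in the proof of Lemma~\ref{salam20}) that, for $E\subseteq X$ and $w\in X$, one has $w\in\overline{E}$ if and only if every open neighbourhood of $w$ meets $E$; in particular $w\in\overline{\{x\}}$ exactly when every open neighbourhood of $w$ contains $x$.

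From this fact, (a)$\Rightarrow$(b) is immediate: any point of $\bigcap\{\overline{\{x\}}:x\in X\}$ lies in $\overline{\{x\}}\cap\overline{\{y\}}$ for all $x,y$. Likewise (a)$\Leftrightarrow$(c) is a one-line verification, since $z\in\bigcap\{\overline{\{x\}}:x\in X\}$ says precisely that every open neighbourhood of $z$ contains every point of $X$, i.e.\ that $X$ is the only open neighbourhood of $z$; so the points $z$ witnessing (a) are exactly those witnessing (c), and (c)$\Rightarrow$(a) is just this read backwards.

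The substantive part is (b)$\Rightarrow$(a). Since $X$ is compact and each $\overline{\{x\}}$ is closed, it suffices to show the family $\{\overline{\{x\}}:x\in X\}$ has the finite intersection property, from which (a) follows. I would prove $\overline{\{x_1\}}\cap\cdots\cap\overline{\{x_n\}}\neq\varnothing$ by induction on $n$, the case $n=1$ being trivial. For the inductive step, pick $u\in\overline{\{x_1\}}\cap\cdots\cap\overline{\{x_{n-1}\}}$; since $\overline{\{u\}}$ is the smallest closed set containing $u$ while each $\overline{\{x_i\}}$ is a closed set containing $u$, we get $\overline{\{u\}}\subseteq\overline{\{x_1\}}\cap\cdots\cap\overline{\{x_{n-1}\}}$. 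Hypothesis (b) now supplies a point $v\in\overline{\{u\}}\cap\overline{\{x_n\}}$, and then $v\in\overline{\{x_1\}}\cap\cdots\cap\overline{\{x_n\}}$, completing the induction.

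The only place that is not pure bookkeeping is exactly this last step: (b) asserts only that \emph{pairwise} intersections are nonempty, and the passage to arbitrary finite intersections relies on the observation that $\overline{\{u\}}\subseteq\overline{\{x\}}$ whenever $u\in\overline{\{x\}}$, which lets one collapse the accumulated intersection to a single closure and re-apply (b). I expect this to be the main (and really the only) obstacle; once it is in place, compactness finishes (b)$\Rightarrow$(a) and the cycle is complete.
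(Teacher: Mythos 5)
Your proposal is correct and follows essentially the same route as the paper: the same induction for (b)$\Rightarrow$(a), using $u\in\overline{\{x\}}\Rightarrow\overline{\{u\}}\subseteq\overline{\{x\}}$ to reduce finite intersections to the pairwise hypothesis and then invoking compactness via the finite intersection property, and the same one-line identification of $\bigcap\{\overline{\{x\}}:x\in X\}$ with the set of points whose only open neighbourhood is $X$ for (a)$\Leftrightarrow$(c).
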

\begin{proof}
It is evident that (a) implies (b).
\\
Now suppose for all $x,y\in X$, $\overline{\{x\}}\cap \overline{\{y\}}\neq\varnothing$. We claim that for all $n\in\mathbb{N}$ and $y_1,\ldots,y_n\in X$, 
$\overline{\{y_1\}}\cap\cdots\cap \overline{\{y_n\}}\neq\varnothing$. For this aim note that
\begin{itemize}
\item for all $y_1,y_2\in X$ we have $\overline{\{y_1\}}\cap \overline{\{y_2\}}\neq\varnothing$ by the assumption.
\item Consider $k\geq2$ such that $\overline{\{y_1\}}\cap\cdots\cap \overline{\{y_k\}}\neq\varnothing$ for all $y_1,\ldots,y_k\in X$ also consider $y_{k+1}\in X$.
Choose $b\in \overline{\{y_1\}}\cap\cdots\cap \overline{\{y_k\}}$, then $\overline{\{b\}}
\subseteq \overline{\{y_1\}}\cap\cdots\cap \overline{\{y_k\}}$ and by the assumption 
$\overline{\{y_1\}}\cap\cdots\cap \overline{\{y_{k+1}\}}\supseteq \overline{\{b\}}\cap \overline{\{y_{k+1}\}}\neq\varnothing$ which shows $\overline{\{y_1\}}\cap\cdots\cap \overline{\{y_{k+1}\}}\neq\varnothing$.
\end{itemize}
Hence $\bigcap\{\overline{\{x\}}:x\in X\}$ is a nonempty collection of closed subsets of
compact space $X$ with finite intersection property, so $\bigcap\{\overline{\{x\}}:x\in X\}\neq\varnothing$. Hence (b) implies (a).
\\
In order to complete the proof note that $\bigcap\{\overline{\{x\}}:x\in X\}=\{x\in X:x$
is the unique open neighbourhood of $x\}$.
\end{proof}
\begin{remark}\label{salam40}
If $X=\{a,b\}$ is a discrete space with two elements, then 
$\sigma:\{a,b\}^{\mathbb N}\to\{a,b\}^{\mathbb N}$
is  Li--Yorke chaotic~\cite{rezavand}.
\end{remark}
\begin{theorem}\label{salam50}
In compact topological space $X$, if there exist $a,b\in X$ such that $\overline{\{a\}}\cap \overline{\{b\}}
=\varnothing$, then $\sigma:X^\mathbb{N}\to X^\mathbb{N}$ is 
Li--Yorke chaotic.
\end{theorem}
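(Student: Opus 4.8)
The plan is to carry an uncountable scrambled set for the two--point full shift over into $X^{\mathbb N}$. First I would fix $a,b\in X$ with $\overline{\{a\}}\cap\overline{\{b\}}=\varnothing$ and record the elementary consequences: since $a\in\overline{\{a\}}$ and $b\in\overline{\{b\}}$ we get $a\neq b$, $b\notin\overline{\{a\}}$ and $a\notin\overline{\{b\}}$, so the two--element subspace $Y:=\{a,b\}$ of $X$ is discrete. As the subspace topology commutes with products, $Y^{\mathbb N}$ carries exactly the product topology of the discrete two--point space when regarded inside $X^{\mathbb N}$, and $\sigma|_{Y^{\mathbb N}}$ is the two--point shift. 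By Remark~\ref{salam40} there is an uncountable set $S\subseteq Y^{\mathbb N}$ which is scrambled for $\sigma|_{Y^{\mathbb N}}$; since $Y^{\mathbb N}$ is compact metrizable, ``scrambled'' there means (Remark~\ref{metric}) that every pair of distinct points of $S$ is proximal and non--asymptotic. It then suffices to show that $S$, viewed inside $X^{\mathbb N}$, is a scrambled set for $\sigma:X^{\mathbb N}\to X^{\mathbb N}$ in the sense of Definition~\ref{general}.

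So I would take distinct $x=(x_n)_{n\in\mathbb N},\ y=(y_n)_{n\in\mathbb N}\in S$ and check (g-1) and (g-2). Condition (g-1) is immediate: given an open neighbourhood $U$ of $\Delta_{X^{\mathbb N}}$ in $X^{\mathbb N}\times X^{\mathbb N}$, the set $U\cap(Y^{\mathbb N}\times Y^{\mathbb N})$ is an open neighbourhood of $\Delta_{Y^{\mathbb N}}$, so proximality of $x,y$ inside $Y^{\mathbb N}$ (equivalently, (g-1) in $Y^{\mathbb N}$) supplies some $n$ with $(\sigma^n x,\sigma^n y)\in U\cap(Y^{\mathbb N}\times Y^{\mathbb N})\subseteq U$. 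For (g-2) I would first note that non--asymptoticity of $x,y$ in $Y^{\mathbb N}$ amounts to $\{n:x_n\neq y_n\}$ being infinite (agreement of the $0$-th coordinate is a clopen condition in $Y^{\mathbb N}\times Y^{\mathbb N}$, so disagreement infinitely often is exactly the failure of the asymptotic property); splitting that infinite set according to which of $x_n,y_n$ equals $a$, at least one of $\{n:x_n=a,\,y_n=b\}$ and $\{n:x_n=b,\,y_n=a\}$ is infinite, and after relabelling $a\leftrightarrow b$ we may assume it is the former.

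The one genuine step is then producing the diagonal neighbourhood that witnesses (g-2). I would let $\pi_0:X^{\mathbb N}\to X$ be the $0$-th coordinate projection and set
\[ U_0:=\bigl(X^{\mathbb N}\times X^{\mathbb N}\bigr)\setminus\bigl(\pi_0^{-1}(\overline{\{a\}})\times\pi_0^{-1}(\overline{\{b\}})\bigr), \]
which is open because $\overline{\{a\}}$ and $\overline{\{b\}}$ are closed and $\pi_0$ is continuous. It contains $\Delta_{X^{\mathbb N}}$ precisely because the hypothesis makes $z_0\in\overline{\{a\}}\cap\overline{\{b\}}$ impossible for any $z\in X^{\mathbb N}$, and $(\sigma^n x,\sigma^n y)\notin U_0$ iff $x_n\in\overline{\{a\}}$ and $y_n\in\overline{\{b\}}$; for $x_n,y_n\in\{a,b\}$ the same disjointness forces $x_n\in\overline{\{a\}}\iff x_n=a$ and $y_n\in\overline{\{b\}}\iff y_n=b$, so $\{n:(\sigma^n x,\sigma^n y)\notin U_0\}=\{n:x_n=a,\,y_n=b\}$ is infinite. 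This gives (g-2), so $S$ is an uncountable scrambled set in $X^{\mathbb N}$ and $\sigma$ is Li--Yorke chaotic.

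I expect everything except this last construction to be bookkeeping (discreteness of $Y$, the subspace/product topology identification, compactness of $X^{\mathbb N}$ via Tychonoff if one phrases (g-1) through proximality). The delicate point is that $U_0$ must simultaneously be open, be a neighbourhood of the \emph{entire} diagonal of $X^{\mathbb N}$, and fail to contain $(\sigma^n x,\sigma^n y)$ for infinitely many $n$ at once; it is exactly the assumption $\overline{\{a\}}\cap\overline{\{b\}}=\varnothing$ that makes all three hold — separating $\Delta_{X^{\mathbb N}}$ from the ``bad'' product and pinning down $x_n\in\overline{\{a\}}\iff x_n=a$.
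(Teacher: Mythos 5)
Your proof is correct, and its skeleton --- restricting to the two--point subspace $\{a,b\}$, observing it is discrete, invoking Remark~\ref{salam40} to obtain an uncountable scrambled set for the two--point shift, and then transferring proximality and non--asymptoticity of each pair up to $X^{\mathbb N}$ --- is exactly the paper's strategy. Where you genuinely diverge is in the transfer of non--asymptoticity, and your version is the stronger one. The paper asserts, without argument, that the witnessing open set $U\subseteq\{a,b\}^{\mathbb N}\times\{a,b\}^{\mathbb N}$ admits an open $W\subseteq X^{\mathbb N}\times X^{\mathbb N}$ containing all of $\Delta_{X^{\mathbb N}}$ with $W\cap(\{a,b\}^{\mathbb N}\times\{a,b\}^{\mathbb N})=U$; since $\{a,b\}$ need not be closed in $X$ (only $\overline{\{a\}}\cap\overline{\{b\}}=\varnothing$ is assumed, and $X^{\mathbb N}$ need not be Hausdorff), such an extension is not automatic, so this step is at best a gap to be filled. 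Your explicit neighbourhood $U_0=\bigl(X^{\mathbb N}\times X^{\mathbb N}\bigr)\setminus\bigl(\pi_0^{-1}(\overline{\{a\}})\times\pi_0^{-1}(\overline{\{b\}})\bigr)$ sidesteps the extension problem entirely: openness, the inclusion of the whole diagonal (precisely from the disjointness hypothesis), and the identification of the exceptional times with the infinite set $\{n:x_n=a,\ y_n=b\}$ are all immediate, and the relabelling $a\leftrightarrow b$ is legitimately done pair by pair because the witness in (g-2) may depend on the pair. In short, your argument is not only correct but supplies the detail that the paper's treatment of the non--asymptotic half omits; the proximality half and all the bookkeeping match the paper.
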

\begin{proof}
Choose $a,b\in X$ such that $\overline{\{a\}}\cap \overline{\{b\}}
=\varnothing$, then $\{a,b\}$with induced topology is a discrete space with two elements.
By Remark~\ref{salam40}, 
$\sigma\restriction_{\{a,b\}^{\mathbb N}}:\{a,b\}^{\mathbb N}\to\{a,b\}^{\mathbb N}$
is topological Li--Yorke chaotic. So $\sigma\restriction_{\{a,b\}^{\mathbb N}}:\{a,b\}^{\mathbb N}\to\{a,b\}^{\mathbb N}$ has an uncountable topological Li-Yorke scrambled set
like $A$. For all distinct $z,w\in A$:
\begin{itemize}
\item $z,w$ are proximal in 
	$\sigma\restriction_{\{a,b\}^{\mathbb N}}:\{a,b\}^{\mathbb N}\to\{a,b\}^{\mathbb N}$, 
	hence there exists a net $\{n_\alpha\}_{\alpha\in\Lambda}$ in
	$\mathbb N$ and $u\in \{a,b\}^{\mathbb N}$ such that nets 
	$\{\sigma^{n_\alpha}(z)\}_{\alpha\in\Lambda}$ and 
	$\{\sigma^{n_\alpha}(w)\}_{\alpha\in\Lambda}$ converge to $u$ in $\{a,b\}^{\mathbb N}$
	thus $\{\sigma^{n_\alpha}(z)\}_{\alpha\in\Lambda}$ and 
	$\{\sigma^{n_\alpha}(w)\}_{\alpha\in\Lambda}$ converge to $u$ in $X^{\mathbb N}$
	(note that $\{a,b\}^\mathbb{N}$ with product topology carries the same subspace
	topology of $X^\mathbb{N}$). Thus $z,w$ are proximal for 
	$\sigma:X^\mathbb{N}\to X^\mathbb{N}$.
\item $z,w$ are not  asymptotic in 
	$\sigma\restriction_{\{a,b\}^{\mathbb N}}:\{a,b\}^{\mathbb N}\to\{a,b\}^{\mathbb N}$, 
	so there exists open subset $U$ of $\{a,b\}^{\mathbb N}\times \{a,b\}^{\mathbb N}$
	containing $\Delta_{\{a,b\}^{\mathbb N}}$ such that 
	$\{n\in\mathbb{N}:(\sigma^n(z),\sigma^n(w))\notin U\}$ is infinite. 
	There exists open subset $W$ of  $X^{\mathbb N}$ containing
	$\Delta_{X^{\mathbb N}}$ such that $W\cap \{a,b\}^{\mathbb N}=U$.
	For each $n\in\mathbb{N}$, $\sigma^n(z),\sigma^n(w)\in\{a,b\}^\mathbb{N}$, thus
	$\{n\in\mathbb{N}:(\sigma^n(z),\sigma^n(w))\notin T\}=
	\{n\in\mathbb{N}:(\sigma^n(z),\sigma^n(w))\notin U\}$ is infinite too and 
	$z,w$ are not  asymptotic in $\sigma:X^\mathbb{N}\to X^\mathbb{N}$.
\end{itemize}
Hence $A$ is a  Li-Yorke scrambled subset of $X^\mathbb{N}$ in
dynamical system $\sigma:X^\mathbb{N}\to X^\mathbb{N}$, in particular
$\sigma:X^\mathbb{N}\to X^\mathbb{N}$ is   Li--Yorke chaotic.
\end{proof}
\begin{theorem}\label{salam60}
For compact Alexandroff space $X$ with at least two elements and 
one--sided shift $\sigma:X^\mathbb{N}\to X^\mathbb{N}$ the following statements are
equivalent:
\begin{itemize}
\item[a.] $\sigma:X^\mathbb{N}\to X^\mathbb{N}$ is 
	Li--Yorke chaotic,
\item[b.] $\sigma:X^\mathbb{N}\to X^\mathbb{N}$ 
	has at least two  Li--Yorke scrambled points,
\item[c.] $\sigma:X^\mathbb{N}\to X^\mathbb{N}$ 
	has at least two  non--asymptotic points,
\item[d.] there exist $a,b\in X$ such that 
	$\overline{\{a\}}\cap \overline{\{b\}}=\varnothing$, 
\item[e.]  $\bigcap\{\overline{\{x\}}:x\in X\}=\varnothing$,
\item[f.] For all $z\in X$, $V(z)\neq X$.
\end{itemize}
\end{theorem}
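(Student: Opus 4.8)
The plan is to run the cycle (a)$\Rightarrow$(b)$\Rightarrow$(c)$\Rightarrow$(d)$\Rightarrow$(a) and, separately, the equivalences (d)$\Leftrightarrow$(e)$\Leftrightarrow$(f), which amount to bookkeeping on the lemmas already established. For (e)$\Leftrightarrow$(f): the ``in particular'' clause of Lemma~\ref{salam20} gives $\bigcap\{\overline{\{x\}}:x\in X\}=\{z\in X:V(z)=X\}$, so this intersection is empty exactly when $V(z)\neq X$ for every $z$. For (d)$\Leftrightarrow$(e): the negation of (e) is condition~(a) of Lemma~\ref{salam30} and the negation of (d) is condition~(b) of Lemma~\ref{salam30}, and those are equivalent by that lemma; condition~(c) of Lemma~\ref{salam30}, being the negation of (f), provides an extra consistency check.

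The two ``easy'' dynamical implications are immediate. For (a)$\Rightarrow$(b): an uncountable scrambled set has at least two distinct points, and any two distinct points of a scrambled set form a scrambled pair. For (b)$\Rightarrow$(c): a scrambled pair satisfies condition (g-2) of Definition~\ref{general}, which is precisely the failure of asymptoticity, so it is a non-asymptotic pair. Finally (d)$\Rightarrow$(a) is exactly Theorem~\ref{salam50} (which in fact uses only compactness, not the Alexandroff hypothesis).

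The substantive step is (c)$\Rightarrow$(d), which I would prove by contraposition. Assume $\overline{\{a\}}\cap\overline{\{b\}}\neq\varnothing$ for all $a,b\in X$; I claim every pair $(u,v)\in X^{\mathbb N}\times X^{\mathbb N}$ is asymptotic, so that $\sigma$ has no non-asymptotic pair. Write $u=(u_n)_{n\in\mathbb N}$ and $v=(v_n)_{n\in\mathbb N}$. For each $m\in\mathbb N$, Lemma~\ref{salam20} applied to $D=\{u_m,v_m\}$ identifies $\overline{\{u_m\}}\cap\overline{\{v_m\}}$ with $\{z\in X:u_m,v_m\in V(z)\}$, which is nonempty by the assumption; fix $c_m$ in this set, so $u_m,v_m\in V(c_m)$. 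Then for all $n,N\in\mathbb N$ one has $\sigma^n(u),\sigma^n(v)\in G(c_{n+1},\dots,c_{n+N})$, hence $(\sigma^n(u),\sigma^n(v))\in\Gamma_N$. Given an arbitrary open neighbourhood $U$ of $\Delta_{X^{\mathbb N}}$ in $X^{\mathbb N}\times X^{\mathbb N}$, Lemma~\ref{salam10} supplies $N\in\mathbb N$ with $\Gamma_N\subseteq U$, so $\{n\in\mathbb N:(\sigma^n(u),\sigma^n(v))\notin U\}=\varnothing$ is finite; thus $(u,v)$ is asymptotic.

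The crux --- and the only point where anything beyond unwinding definitions is required --- is this last implication, which converts a purely topological smallness condition on $X$ into control over the asymptotics of every orbit pair in $X^{\mathbb N}$. The two enabling facts are that, for compact Alexandroff $X$, Lemma~\ref{salam10} reduces the test for asymptoticity to the single countable family $\{\Gamma_N\}_{N\in\mathbb N}$, and that membership in $\Gamma_N$ is decided coordinate by coordinate, so that Lemma~\ref{salam20} pins the coordinatewise obstruction down to exactly the failure of $\overline{\{a\}}\cap\overline{\{b\}}\neq\varnothing$. Once that is in place, no estimates remain.
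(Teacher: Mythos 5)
Your proof is correct, and its overall skeleton (the lemma-based equivalence of (d), (e), (f); the chain (a)$\Rightarrow$(b)$\Rightarrow$(c); and Theorem~\ref{salam50} for (d)$\Rightarrow$(a)) matches the paper's. The one place you diverge is the closing implication. The paper closes the cycle with (c)$\Rightarrow$(f) by contraposition and a one-line observation: if $V(z)=X$ for some $z\in X$, then the only open neighbourhood of the constant point $\bigl((z)_{n\in\mathbb N},(z)_{n\in\mathbb N}\bigr)$ in $X^{\mathbb N}\times X^{\mathbb N}$ is the whole space, hence the only open set containing $\Delta_{X^{\mathbb N}}$ is $X^{\mathbb N}\times X^{\mathbb N}$ itself and every pair is trivially asymptotic; this needs neither compactness nor Lemma~\ref{salam10}. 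You instead prove (c)$\Rightarrow$(d) by contraposition, choosing for each coordinate a point $c_m$ with $u_m,v_m\in V(c_m)$ (via Lemma~\ref{salam20}) and invoking Lemma~\ref{salam10} to reduce asymptoticity to membership in the sets $\Gamma_N$. Your route is heavier --- it genuinely uses the compactness-dependent direction of Lemma~\ref{salam10} --- but it shows directly that the negation of (d) (rather than of (f)) kills all non-asymptotic pairs, which is a more robust argument: it would survive in situations where no single point $z$ satisfies $V(z)=X$ yet all pairwise closures still meet, i.e.\ it does not lean on Lemma~\ref{salam30}'s compactness argument to pass from pairwise intersections to a global one. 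Both arguments are valid under the theorem's hypotheses, so this is a matter of economy versus directness; no gap either way.
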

\begin{proof}
(d, e, f) are equivalent by Lemmas~\ref{salam20} and \ref{salam30}. By
Theorem~\ref{salam50}, (c) implies (a). Obviously (a) implies (b), also (b) implies (c). 
In order to show
(c) implies (f), suppose there exists $z\in X$ such that $V(z)=X$, then $X^\mathbb{N}
\times X^\mathbb{N}$ is the only open neighbourhood of $((z)_{n\in\mathbb N},(z)_{
n\in\mathbb N})$ in $X^\mathbb{N}\times X^\mathbb{N}$. So only open subset of 
$X^\mathbb{N}\times X^\mathbb{N}$ containing $\Delta_{X^\mathbb{N}}$ is
$X^\mathbb{N}\times X^\mathbb{N}$ itself, so for all $z,y\in X^\mathbb{N}$
and $n\in \mathbb{N}$ and open set $U$ containing 
$\Delta_{X^\mathbb{N}}$ we have $(\sigma^n(z),\sigma^n(y))\in U$, in particular 
$z,y$ are asymptotic.
\end{proof}
\noindent We say $\alpha\in A$ is a quasi--periodic point of self--map $f:A\to A$
if $\{f^n(\alpha):n\geq1\}$ is finite (or equivalently there exist $s>t\geq1$ such that
$f^s(\alpha)=f^t(\alpha)$).
\\
In the following remark, we pat attention to generalized shifts.
Suppose $\Lambda$ is a nonempty set and self--map $\varphi:\Lambda\to\Lambda$,
we call $\mathop{\sigma_\varphi:X^\Lambda\to X^\Lambda}\limits_{(x_\alpha)_{\alpha\in\Lambda}
\mapsto(x_{\varphi(\alpha)})_{\alpha\in\Lambda}}$ a generalized shift.
Generalized shift has been introduced for the first time in \cite{first}, however
dynamical and non--dynamical properties have been studied in several texts
(see e.g. \cite{rezavand, anna}).
\begin{remark}\label{salam70}
Consider arbitrary self--map $\varphi:\Lambda\to\Lambda$ and generalized shift 
\linebreak
$\sigma_\varphi:X^\Lambda\to X^\Lambda$, then:
\begin{itemize}
\item For finite discrete $X=\{a,b\}$ with two elements, by \cite{rezavand}
	$\sigma_\varphi:X^\Lambda\to X^\Lambda$ is (uniform) Li--Yorke chaotic if and only if
	$\varphi:\Lambda\to\Lambda$ has at least one non--quasi periodic point,
\item using a similar method described for Theorem~\ref{salam60}, for compact Alexandroff
	space $X$ the following statements are equivalent:
	\begin{itemize}
	\item $\sigma_\varphi:X^\Lambda\to X^\Lambda$ is  Li--Yorke chaotic,
	\item $\sigma_\varphi:X^\Lambda\to X^\Lambda$ 
		has at least two  Li--Yorke scrambled points,
	\item $\sigma_\varphi:X^\Lambda\to X^\Lambda$ 
		has at least two  non--asymptotic points,
	\item $\varphi:\Lambda\to\Lambda$ has at least one non--quasi periodic point and 
		there exist $a,b\in X$ such that 	$\overline{\{a\}}\cap \overline{\{b\}}=\varnothing$.
	\end{itemize}
\end{itemize}
\end{remark}

\noindent {\small {\bf Mehrnaz Pourattar}, 
Department of Mathematics, Science and Research Branch, Islamic
Azad University, Tehran, Iran
 (mpourattar@yahoo.com)}
\\
{\small {\bf Fatemah Ayatollah Zadeh Shirazi}, Faculty
of Mathematics, Statistics and Computer Science, College of
Science, University of Tehran, Enghelab Ave., Tehran, Iran
\linebreak (f.a.z.shirazi@ut.ac.ir)}
%
%
%
%
%
%
%
%
%

\end{document}